%

\documentclass[a4paper, 10pt]{amsart}


\usepackage{ amssymb, amsmath, amsthm}
\usepackage{graphicx, psfrag, setspace, subfigure, gensymb}
\usepackage{amsfonts}
\usepackage{bbm}
\usepackage{fix-cm}
\usepackage{comment}
\usepackage{tikz,tikz-cd}
\usetikzlibrary{matrix,arrows,decorations.pathmorphing,decorations.pathreplacing}
\tikzset{commutative diagrams/diagrams={baseline=-2.5pt},commutative diagrams/arrow style=tikz}
\usepackage[colorlinks]{hyperref}
\usepackage{float}
\usepackage{setspace} 


\newcommand\C{\mathbb C}

\newcommand{\cE}{\mathcal{E}}

\newcommand{\cK}{\mathcal{K}}

\newcommand{\cO}{\mathcal{O}}


\newcommand{\set}[1]{\left\{{#1}\right\}}
\newcommand{\cone}[1]{\big[{#1}\big]}

\newcommand\into{\hookrightarrow}
\newcommand\To{\longrightarrow}

\newcommand\End{\operatorname{End}}

\newcommand\Ext{\operatorname{Ext}}

\renewcommand\P{\mathbb P}
\newcommand\Gr{\operatorname{Gr}}

\newcommand{\Tot}{\operatorname{Tot}}
\newcommand{\Sym}{\operatorname{Sym}}

\newcommand{\Wedge}{\mbox{\scalebox{1.2}{$\wedge$}}}


\newcommand{\beq}[1]{\begin{equation}\label{#1} }
\newcommand{\eeq}{\end{equation}}
\newcommand{\pgap}{\vspace{5pt}}

\newtheorem{thm}[equation]{Theorem}
\newtheorem{lem}[equation]{Lemma}

\makeatletter \@addtoreset{equation}{section} \makeatother

\setcounter{tocdepth}{2}
\let\oldtocsection=\tocsection
\let\oldtocsubsection=\tocsubsection
\let\oldtocsubsubsection=\tocsubsubsection
\renewcommand{\tocsection}[3]{\hspace{0em}\oldtocsection{#1}{#2}{#3}}
\renewcommand{\tocsubsection}[3]{ \hspace{1em} \oldtocsubsection{#1}{\small{#2}}{\small{#3}} }
\renewcommand{\tocsubsubsection}[3]{\hspace{2em}\oldtocsubsubsection{#1}{\small{#2}}{\small{#3}}}

\setlength{\marginparwidth}{1in}
\newcommand{\marginparstretch}{0.6}
\let\oldmarginpar\marginpar
\renewcommand\marginpar[1]{\-\oldmarginpar[\framebox{\setstretch{\marginparstretch}\begin{minipage}{\marginparwidth}{\raggedleft\scriptsize #1}\end{minipage}}]{\framebox{\setstretch{\marginparstretch}\begin{minipage}{\marginparwidth}{\raggedright\scriptsize #1}\end{minipage}}}}


\newcommand{\aand}{\quad\quad\mbox{and}\quad\quad}


\newcommand{\LGr}{\operatorname{LGr}}



\begin{document}

\title{A new 5-fold flop and derived equivalence}
\author{Ed Segal}

\maketitle

\begin{abstract}
We describe a new example of a flop in 5-dimensions, due to Roland Abuaf, with the nice feature that the contracting loci on either side are not isomorphic. We prove that the two sides are derived equivalent.

\pgap
\noindent
\emph{MSC2010 classes: 14E05, 13D09.}
\end{abstract}


\section{Introduction and result}

Let $V$ be a 4-dimensional complex vector space, with a fixed symplectic form. From this data we construct two non-compact Calabi-Yau 5-folds, as follows. Let $\LGr(V)$ denote the Grassmannian of Lagrangian subspaces of $V$, which is a quadric 3-fold, and let $S\subset V$ denote the tautological vector bundle over $\LGr(V)$. Our first 5-fold is the total space of a rank 2 vector bundle:
$$X_+ = \Tot\big( S\otimes \Wedge^2 S \To \LGr(V) \big) $$
Now consider $\P V$, and let $L$ denote the usual tautological line bundle. We let $L^\perp$ denote the rank 3 vector bundle given by the symplectic orthogonal to $L$, and note that it contains $L$ as a subbundle. Our second 5-fold is:
$$X_- = \Tot \big( (L^\perp/L)\otimes L^2 \To \P V\big) $$

\begin{thm}\label{thm.main} $X_+$ and $X_-$ are birational Calabi-Yaus, and they are derived equivalent.
\end{thm}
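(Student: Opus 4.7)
The first step is to construct a singular affine Calabi--Yau $Y$ which both $X_\pm$ small-resolve. There are natural morphisms $X_\pm\to V\otimes\Wedge^2 V$: on $X_+$ send $(\Lambda, v\otimes\omega)\mapsto v\otimes\omega$ via the inclusions $S\hookrightarrow V$ and $\Wedge^2 S\hookrightarrow\Wedge^2 V$; on $X_-$ send $(\ell, w\otimes s)\mapsto(cu)\otimes(u\wedge\tilde w)$, where $u$ is any generator of $\ell$, $s=c\cdot u^{\otimes 2}$, and $\tilde w\in\ell^\perp$ any lift of $w$ (a direct check shows the image is independent of these choices). Both maps factor through the closed subvariety
\[
Y=\{v\otimes\omega\in V\otimes\Wedge^2 V : \omega\text{ is Lagrangian and decomposable},\ v\in\operatorname{supp}(\omega)\}.
\]
Away from $0\in Y$ each map is an isomorphism: given a nonzero $v\otimes\omega$, the Lagrangian $\Lambda=\operatorname{supp}(\omega)$ and the line $\ell=[v]$ are both recovered from the tensor. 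The zero sections $\LGr(V)\cong Q^3\subset X_+$ and $\P V\cong\P^3\subset X_-$, by contrast, both collapse onto $0$, exhibiting $(X_+, X_-)$ as a flop with non-isomorphic exceptional loci. The Calabi--Yau property on each side follows from elementary determinant computations: $\det(S\otimes\Wedge^2 S)=(\Wedge^2 S)^{\otimes 3}=K_{\LGr(V)}$, and $\det((L^\perp/L)\otimes L^2)=L^{\otimes 4}=K_{\P V}$.

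\textbf{Derived equivalence via VGIT and magic windows.} For the derived equivalence I would pursue the windows approach. The goal is to present $Y$ as an affine GIT quotient $W/\!\!/_0 G$ of a linear $G$-action, with the two chambers yielding $X_\pm$. A natural candidate is
\[
W=\{(u_1, u_2, a_1, a_2, b)\in V\oplus V\oplus\C^2\oplus\C : \sigma(u_1, u_2)=0\}
\]
acted on by $G=GL(2)\times\C^*$, arranged so that $v=a_1u_1+a_2u_2$ and $\omega=b\cdot u_1\wedge u_2$ combine into the $G$-invariant tensor $v\otimes\omega$ on $V\otimes\Wedge^2 V$, thus realising the map $W/\!\!/G\to Y$ constructed above. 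One chamber (forcing $u_1\wedge u_2\neq 0$) should recover $X_+$, and the other (forcing $b\neq 0$) should recover $X_-$. Given such a presentation, the magic-window machinery of Halpern-Leistner, Ballard--Favero--Katzarkov, Donovan--Segal and \v{S}penko--Van den Bergh should identify a single subcategory $\cW\subset D^b_G(W)$ restricting to equivalences with both $D^b(X_\pm)$, and composing these restrictions yields the desired equivalence $D^b(X_+)\cong D^b(X_-)$.

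\textbf{Fallback and main obstacle.} If the VGIT presentation proves awkward, a more hands-on alternative is a direct tilting construction: pull back Kapranov's exceptional collection from the quadric $Q^3\cong\LGr(V)$ to obtain a tilting bundle on $X_+$; pull back Beilinson's collection from $\P^3=\P V$ to obtain one on $X_-$; then identify the two endomorphism algebras (possibly after mutation) as a common noncommutative crepant resolution of $Y$ in the sense of Van den Bergh. The principal difficulty in either approach is the essential asymmetry of the flop: the two exceptional loci $Q^3$ and $\P^3$ are genuinely different, even though they happily share Euler characteristic $4$, the length of the relevant exceptional collections. The delicate task is to verify that a single interval of $G$-characters simultaneously satisfies the grade restriction rules at both unstable strata --- the existence of such a common ``magic window'' is the core content, and its verification is the combinatorial heart of the proof, where the novelty of this 5-fold Abuaf flop makes the weight bookkeeping genuinely nontrivial.
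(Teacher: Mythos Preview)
Your birationality argument via a common affine contraction $Y$ is fine and in fact gives slightly more than the paper, which instead exhibits a common roof $\hat{X}$ (a line bundle over the isotropic flag variety $F=\{L\subset S\subset V\}$) with birational maps down to both sides.

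Your main proposal for the derived equivalence, however, runs straight into a wall the paper explicitly flags. The author writes that he had originally hoped for a VGIT/windows proof, that it is not hard to find GIT presentations producing both $X_\pm$, but that he \emph{could not find one satisfying the necessary hypotheses}. Your sketch does not supply the missing ingredient: you correctly identify the existence of a common magic window as ``the core content'' and ``genuinely nontrivial'', but that is exactly the step that is not known to go through here, and without it the argument has no content. The asymmetry you note is real --- the two unstable strata have different geometry, and the standard numerical criteria for a shared window do not obviously hold.

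Your fallback is much closer to what the paper actually does, but it contains a concrete gap. The pullback of Beilinson's line-bundle collection to $X_-$ is \emph{not} a tilting bundle: one computes $H^1(X_-, L^3)=\C$ (Lemma~\ref{lem.H1onX-}), so $\Ext^1$ between the extreme members is nonzero. The paper circumvents this by a different mechanism. It pulls back Kuznetsov's collection $\cO,\, S^\vee,\, \Wedge^2 S^\vee,\, (\Wedge^2 S^\vee)^{2}$ to $X_+$ to obtain a tilting bundle $T_+$, and then \emph{extends} $T_+$ across the common open set $X_o$ to a bundle $T_-$ on $X_-$. Since $X_+$ and $X_-$ are isomorphic outside codimension two, $\End_{X_+}(T_+)\cong\End_{X_-}(T_-)$ automatically --- no mutation or explicit NCCR comparison is required. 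The real work is that the rank-two summand $S^\vee$ does not extend as a pullback from $\P V$: one must construct a bundle $\Sigma^\vee$ on $X_-$ as the unique nontrivial extension of $L^{-1}$ by $L^{2}$, using precisely the nonzero class in $H^1(X_-, L^3)$ that obstructs your Beilinson approach. Verifying that $T_-$ spans and is tilting is then a short cohomology calculation.
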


The fact that both $X_+$ and $X_-$ are Calabi-Yau is a routine calculation, and the fact that they are birational is an elementary piece of geometry (see Section \ref{sect.birationality}). An interesting feature of this example is that the contracting loci on either side are not isomorphic, since one is $\P V$ and the other is $\LGr(V)$. This is in contrast to the case of standard (Atiyah) flops, or Mukai flops.  It is not difficult to construct examples of flops with this feature by using families of standard or Mukai flops, or by allowing one side to be an orbifold, however if one rules out those constructions then we believe that this is the first such example to appear in the literature.

The technical content of the above theorem is the fact that $X_+$ and $X_-$ are derived equivalent. This result is yet another piece of evidence in favour of the well-known conjectures of Bondal--Orlov \cite{BO} and Kawamata \cite{Kaw}, which state that two birational and K-equivalent varieties should be derived equivalent. For this example the argument is not difficult, we simply construct tilting bundles on each space which have the same endomorphism algebra. The proof is given in Sections \ref{sect.derivedeq} and \ref{sect.calculations} below.
\pgap

I had originally hoped to give a proof of this result using the theory of derived categories and Variation-of-GIT, as developed by \cite{BFK, HL} following \cite{Seg}.  However, although it is not difficult to construct GIT problems producing both $X_+$ and $X_-$, I could not find one satisfying the necessary hypotheses.  It would be interesting to see a second proof along those lines.

\subsection{Acknowledgements}
 It is a pleasure to thank Roland Abuaf for allowing me to use his beautiful example, and for asking me if it lead to a derived equivalence. I would also like to thank Dan Halpern-Leistner for useful conversations.

\section{Proof}\label{sect.proof}

\subsection{Birationality}\label{sect.birationality}

As in the previous section we let $V$ be a 4-dimensional vector space with a fixed symplectic form. Let $F$ denote the `isotropic flag variety' 
$$ F = \set{ L\subset S \subset V }$$
where $S\subset V$ is a Lagrangian subspace, and $L\subset S$ is a line. Obviously $F$ is a $\P^1$ bundle over $\LGr(V)$, it is $\P S$. It it also a $\P^1$ bundle over $\P V$, in fact:
$$F \cong \P (L^\perp/L) $$
This is because a plane $S$ contains a line $L$ iff the determinant $\Wedge^2 S \subset \Wedge^2 V$ lies in the subspace $(V/L)\otimes L\subset \Wedge^2 V$, and $S$ is additionally Lagrangian iff $\Wedge^2 S$ lies in $ (L^\perp/L)\otimes L$.

 $F$ carries two obvious line-bundles, $L$ and $\Wedge^2 S$, and we let 
$$\hat{X} = \Tot\big(L\otimes \Wedge^2 S \To F \big)$$
be the total space of their product. There is an evident birational equivalence from $\hat{X}$ to $X_+$ given by `forgetting $L$', at the zero sections this map is the $\P^1$ bundle $F \to \LGr(V)$, and away from the zero sections it is an isomorphism. In fact $\hat{X}$ is the blow-up of $X_+$ along $\LGr(V)$.

 There is also a birational equivalence from $\hat{X}$ to $X_-$ given by the map `forget $S$', \emph{i.e.}~we have an inclusion of bundles over $F$
$$\Wedge^2 S\otimes L \into (L^\perp/L)\otimes L^2 $$
and the latter space is a $\P^1$ bundle over $X_-$, so the composition gives a map from $\hat{X}$ to $X_-$. Again this map is an isomorphism away from the zero sections, and extends the $\P^1$ bundle $F \to \P V$. Also $\hat{X}$ is the blow-up of $X_-$ along $\P V$.

Consequently, $X_+$ and $X_-$ are birationally-equivalent.

\subsection{Derived equivalence}\label{sect.derivedeq}
Kuznetsov \cite{KuzECs} found a full strong exceptional collection on $\LGr(V)$ consisting of the four objects:
$$ \cO, \quad S^\vee, \quad \Wedge^2 S^\vee,\aand (\Wedge^2 S^\vee)^{\otimes 2} $$
(one rank 2 vector bundle and three line-bundles). Pull these four bundles up to $X_+$, and let $T_+$ denote their direct sum. On $\LGr(V)$ these four bundles span the derived category, \emph{i.e.}~there is no non-zero $\cE\in D^b(\LGr(V))$ such that applying $\Ext^\bullet_{\LGr(V)}(-, \cE)$ gives zero on all four bundles. Since the push-down functor from $D^b(X_+)$  to $D^b(\LGr(V))$ has no kernel, it follows from adjunction that $T_+$ spans $D^b(X_+)$. Moreover one can calculate that $T_+$ has no higher self-Ext groups (Lemma \ref{lem.vanishingonX+}) so it is a tilting bundle on $X_+$. Hence $D^b(X_+)$ is equivalent to the derived category $D^b(\End_{X_+}(T_+))$ of the endomorphism algebra of $T_+$ (see for example \cite[Thm.~7.6]{HV}).

Now let $X_o\subset X_\pm$ denote the open set where $X_+$ and $X_-$ are isomorphic, namely the complement of the zero sections, and consider the vector bundle $T_o=T_+|_{X_0}$. We claim that that $T_o$ extends to a vector bundle $T_-$ on $X_-$ which is a tilting bundle. This claim immediately implies the derived equivalence - since $X_+$ and $X_-$ are isomorphic outside of co-dimension two, we have that $\End_{X_-}(T_-)$ is canonically isomorphic to $\End_{X_+}(T_+)$, and hence:
$$D^b(X_+) \cong D^b(\End_{X_+}(T_+))=  D^b(\End_{X_-}(T_-))\cong D^b(X_-)$$
Now we prove the claim. First we extend each of the four summands of $T_0$ to vector bundles on $X_-$. The rank 1 summands are obvious: over $X_o$, the line bundles  $\Wedge^2 S^\vee$ and $L$ are canonically isomorphic, so the necessary line bundles on $X_-$ are:
$$\cO, \quad L,\aand  L^2$$
We also need to extend the rank 2 summand $S^\vee$ from $X_o$ to $X_-$. Note that over $X_o$ we have a short-exact-sequence:
$$S \To L^\perp \oplus S/L \To L^\perp/L$$
This is a just a formal consequence of the inclusions $L\subset S\subset L^\perp$. Now $S/L \cong \Wedge^2 S\otimes L^{-1}\cong L^{-2}$, and the second map in the above sequence can be extended over $X_-$ as the map
$$(1, -\tau) : \;L^\perp \oplus L^{-2} \To L^\perp/L $$
where $\tau$ is the tautological section of $(L^\perp/L)\otimes L^2$. This map has full rank everywhere, so its kernel is a rank 2 vector bundle on $X_-$ which we denote by $\Sigma$. By construction $\Sigma|_{X_o}$ is $S$. Also note that $\Sigma$ fits into a short-exact-sequence
$$ L \To \Sigma \To L^{-2}$$
(in fact it is the unique such non-trivial extension by Lemma \ref{lem.H1onX-}). So $S^\vee$ extends to the bundle $\Sigma^\vee$, which is a non-trivial extension of $L^{-1}$ by $L^2$.

We have extended $T_o$ to a vector bundle $T_-$ on $X_-$, namely:
$$T_- =\cO\oplus  \Sigma^\vee \oplus  L\oplus L^2$$
 It is clear that $T_-$ spans $D^b(X_-)$, since the sub-category  split-generated by $T_-$ contains $\cO, L, L^2$ and $L^{-1}$, and these four line bundles span $D^b(X_-)$ by adjunction. Hence it only remains to show that $T_-$ has no higher self-Ext groups. One can calculate that $H^{>0}(X_-, L^k) = 0$ for $k\leq 2$, that  $H^1(X_-, L^3)=\C$, and that $H^{>1}(X_-, L^3)=0$ (Lemmas \ref{lem.vanishingonX-} and \ref{lem.H1onX-}), from which it follows easily that $\Ext^{>0}_{X_-}(T_-, T_-) = 0$. 

This concludes the proof of Theorem \ref{thm.main}.

\subsection{Cohomology calculations}\label{sect.calculations}

We now give some details of the cohomology calculations required for the argument of the previous section.

\begin{lem}\label{lem.vanishingonGr} Consider the bundle $S^\vee$ on the Grassmannian $\Gr(2, V)$. If  $m\geq -2$, then for any $k\geq 0$ we have:
$$H^{>0}\big(\Gr(2, V),\, \Sym^k S^\vee\otimes (\Wedge^2 S)^{-m}\big) = 0 $$
Setting $m=-3$, we have:
$$H^\bullet\big(\Gr(2, V),\, S^\vee\otimes (\Wedge^2 S)^3\big) =0 \aand   H^\bullet\big(\Gr(2, V),\, (\Wedge^2 S)^3\big) = 0 $$
\end{lem}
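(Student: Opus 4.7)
The plan is to apply the Borel--Weil--Bott theorem on $\Gr(2, V)$. First I would rewrite each bundle in the statement as a Schur functor $\Sigma^{(a,b)} S^\vee$ applied to the dual tautological bundle, using the convention $\Sigma^{(a,b)} S^\vee = \Sym^{a-b} S^\vee \otimes (\Wedge^2 S^\vee)^b$ extended to arbitrary integers $a \geq b$. Under this identification, $\Sym^k S^\vee \otimes (\Wedge^2 S)^{-m}$ becomes $\Sigma^{(k+m,\, m)} S^\vee$ for every integer $m$.

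Next, Borel--Weil--Bott reduces the cohomology of $\Sigma^{(a,b)} S^\vee$ to a combinatorial check on the sequence obtained by adding $\rho = (3, 2, 1, 0)$ to $(a, b, 0, 0)$: if $(a + 3,\, b + 2,\, 1,\, 0)$ has a repeated entry then every cohomology group vanishes, and otherwise the cohomology is concentrated in a single degree, equal to the number of transpositions needed to sort the sequence strictly decreasingly (and given by a Schur functor applied to $V^\vee$). Since I only need vanishing statements, all that remains is a short case-check to show that in each case the sequence is either already strictly decreasing (giving cohomology only in degree $0$) or has a repeat (giving full acyclicity).

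For the first assertion I would split according to the value of $m \geq -2$. When $m \geq 0$ the sequence $(k + m + 3,\, m + 2,\, 1,\, 0)$ is visibly strictly decreasing, so the higher cohomology vanishes. When $m = -1$ the sequence is $(k + 2,\, 1,\, 1,\, 0)$, which has a repeat at $1$ for every $k \geq 0$. When $m = -2$ it is $(k + 1,\, 0,\, 1,\, 0)$, which has a repeat at $0$. For the second assertion, the sequences for the two bundles with $m = -3$ are $(1,\, -1,\, 1,\, 0)$ in the $k = 1$ case and $(0,\, -1,\, 1,\, 0)$ in the $k = 0$ case, each with a repeated entry, so both bundles are fully acyclic.

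The only real obstacle is fixing the conventions cleanly: identifying $(\Wedge^2 S)^{-m}$ with $(\Wedge^2 S^\vee)^m$, and choosing a consistent form of Borel--Weil--Bott. I would follow Weyman's formulation in \emph{Cohomology of Vector Bundles and Syzygies}, which packages the shift-and-sort-by-$\rho$ recipe in one go and sidesteps any need to manipulate root data by hand. Once those conventions are in place the lemma reduces to the table of four sequences listed above.
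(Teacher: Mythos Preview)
Your proposal is correct and follows exactly the approach the paper indicates: the paper's proof is the single sentence ``This is a standard Borel--Weil--Bott calculation, see e.g.~\cite{KuzECs}.'' You have simply carried out that calculation explicitly, and the four $\rho$-shifted sequences you list are correct and give the claimed vanishing.
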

\begin{proof} This is a standard Borel--Weil--Bott calculation, see e.g.~\cite{KuzECs}.
\end{proof}

\begin{lem}\label{lem.vanishingonLGr} On the Langrangian Grassmannian $\LGr(V)$, if $m\geq -1$ then for any $k\geq 0$ we have:
$$H^{>0}\big(\LGr(V),\, \Sym^k S^\vee\otimes (\Wedge^2 S)^{-m}\big) = 0 $$
Setting $m=-2$, we have:
$$H^\bullet\big(\LGr(V),\, S^\vee\otimes (\Wedge^2 S)^2\big) =0 \aand  H^\bullet\big( \LGr(V),\, (\Wedge^2 S)^2\big) = 0 $$
\end{lem}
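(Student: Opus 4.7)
The plan is to argue exactly as in Lemma~\ref{lem.vanishingonGr}, via Borel--Weil--Bott, now applied to the homogeneous space $\LGr(V) = \operatorname{Sp}(V)/P$ with $P$ the Siegel parabolic of Levi type $\GL(2)$. Each of the bundles appearing in the statement is the irreducible homogeneous bundle associated to an irreducible $\GL(2)$-representation, so its cohomology is determined by the position of a single weight in the $C_2$ weight lattice.

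First I would fix coordinates on the $\operatorname{Sp}(V)$-torus so that $\rho = (2,1)$ and the positive roots are $e_1 - e_2$, $e_1 + e_2$, $2e_1$, $2e_2$. With the standard convention (pinned down, say, by $H^0(\LGr,S^\vee)=V$), the bundle $\Sym^k S^\vee\otimes (\Wedge^2 S)^{-m}$ corresponds to the $\operatorname{Sp}(V)$-weight $\lambda=(k+m,\,m)$, so Borel--Weil--Bott instructs us to form $\lambda+\rho=(k+m+2,\,m+1)$ and either identify the unique Weyl-group element making it strictly dominant, or else conclude vanishing of all cohomology because it is singular.

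The cases then split. For $m\geq 0$ and $k\geq 0$, the weight $\lambda+\rho$ already satisfies $k+m+2>m+1>0$, so it is strictly dominant of length zero and $H^\bullet$ lives entirely in degree~$0$; this is even stronger than the claimed $H^{>0}=0$. For $m=-1$ we get $\lambda+\rho=(k+1,\,0)$, orthogonal to the long root $2e_2$ and hence singular, so all cohomology vanishes. These two cases together prove the first assertion. For the second assertion, $m=-2$, $k=0$ gives $\lambda+\rho=(0,\,-1)$, orthogonal to $2e_1$, while $m=-2$, $k=1$ gives $\lambda+\rho=(1,\,-1)$, orthogonal to $e_1+e_2$; both are singular, so $H^\bullet$ vanishes in each case.

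There is no real obstacle here beyond pinning down the sign conventions for BWB; the only geometric input is the identification of the Levi of the Siegel parabolic as $\GL(2)$ acting standardly on the tautological Lagrangian, exactly as in the reference already cited.
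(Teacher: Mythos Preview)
Your argument is correct: the Borel--Weil--Bott computation for $\operatorname{Sp}(4)$ is set up with the right conventions, and the singular/dominant analysis of $\lambda+\rho=(k+m+2,\,m+1)$ in each case is accurate.

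The paper's primary proof takes a slightly different route: it uses the fact that $\LGr(V)$ is a hyperplane section of $\Gr(2,V)$, so the short exact sequence $\Wedge^2 S \to \cO \to \cO_{\LGr(V)}$ on $\Gr(2,V)$ reduces everything to the $\operatorname{GL}(4)$ Borel--Weil--Bott calculation already done in Lemma~\ref{lem.vanishingonGr}. However, the paper explicitly offers your method as an alternative (``Alternatively one may compute directly using Borel--Weil--Bott for $Sp(4)$''), so you have simply carried out in detail what the paper leaves to the reader. The hyperplane approach has the mild advantage of recycling the earlier lemma, while yours is more self-contained and avoids any use of $\Gr(2,V)$.
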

\begin{proof}
$\LGr(V)$ is a linear hyperplane in $\Gr(2,V)$, so we use the short-exact-sequence
$$ \Wedge^2 S \To \cO \To  \cO_{\LGr(V)} $$
on $\Gr(2,V)$ together with Lemma \ref{lem.vanishingonGr}. Alternatively one may compute directly using Borel--Weil--Bott for $Sp(4)$.
\end{proof}

\begin{lem}\label{lem.vanishingonX+} On $X_+$, for the bundle
$$T_+ = \cO \oplus S^\vee \oplus \Wedge^2 S^{-1} \oplus (\Wedge^2 S)^{-2}$$
we have:
$$ \Ext^{>0}_{X_+}( T_+, T_+) = 0 $$
\end{lem}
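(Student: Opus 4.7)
The approach is to push everything down to $\LGr(V)$ along the bundle projection $\pi: X_+ \to \LGr(V)$ and reduce to Lemma~\ref{lem.vanishingonLGr}. The map $\pi$ is affine, with
$$\pi_* \cO_{X_+} = \Sym^\bullet\bigl(S^\vee \otimes (\Wedge^2 S)^{-1}\bigr) = \bigoplus_{k \geq 0} \Sym^k S^\vee \otimes (\Wedge^2 S)^{-k},$$
using the identity $\Sym^k(L \otimes \cE) = L^{\otimes k} \otimes \Sym^k \cE$ for a line bundle $L$. Setting $E := \cO \oplus S^\vee \oplus (\Wedge^2 S)^{-1} \oplus (\Wedge^2 S)^{-2}$, projection formula reduces the claim to showing
$$H^{>0}\bigl(\LGr(V),\; E^\vee \otimes E \otimes \Sym^k S^\vee \otimes (\Wedge^2 S)^{-k}\bigr) = 0$$
for every $k \geq 0$.

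To handle all sixteen summands of $E^\vee \otimes E$ uniformly, I would use the rank-$2$ identity $S \cong S^\vee \otimes \Wedge^2 S$ to write each summand as $\Sym^a S^\vee \otimes (\Wedge^2 S)^b$ with $a \in \set{0, 1, 2}$; here $a = 2$ occurs only via the splitting $S \otimes S^\vee = \cO \oplus \Sym^2 S^\vee \otimes \Wedge^2 S$. Combining this with the Clebsch--Gordan rule
$$\Sym^k S^\vee \otimes \Sym^a S^\vee \;=\; \bigoplus_{i=0}^{\min(k,a)} \Sym^{k+a-2i} S^\vee \otimes (\Wedge^2 S)^{-i},$$
every relevant cohomology group takes the form $H^{>0}(\LGr(V),\, \Sym^j S^\vee \otimes (\Wedge^2 S)^c)$ with $c = b - k - i$, which the first half of Lemma~\ref{lem.vanishingonLGr} kills whenever $c \leq 1$.

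A short enumeration shows the condition $c \leq 1$ fails only when $b \geq 2$ and simultaneously $k = i = 0$. The only summands of $E^\vee \otimes E$ with $b = 2$ are the two bundles $(\Wedge^2 S)^2$ and $S^\vee \otimes (\Wedge^2 S)^2$, coming from the pairings $\bigl((\Wedge^2 S)^{-2}\bigr)^\vee \otimes \cO$ and $\bigl((\Wedge^2 S)^{-2}\bigr)^\vee \otimes S^\vee$. These are precisely the two bundles listed in the second half of Lemma~\ref{lem.vanishingonLGr} as having vanishing \emph{full} cohomology, so the remaining edge cases are handled too. The only real obstacle in the plan is bookkeeping: one must enumerate $E^\vee \otimes E$, keep track of the $(a, b)$ types under Clebsch--Gordan, and observe that the terms falling outside generic Borel--Weil--Bott vanishing coincide exactly with the borderline $m = -2$ case of the lemma.
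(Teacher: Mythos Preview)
Your proof is correct and follows essentially the same route as the paper: push down along $\pi$, use $S \cong S^\vee \otimes \Wedge^2 S$ and the $SL_2$ decomposition of $S\otimes S^\vee$ to enumerate the summands of $E^\vee\otimes E$, then apply the Pieri/Clebsch--Gordan rule together with Lemma~\ref{lem.vanishingonLGr}. Your bookkeeping is just a more explicit version of the paper's, and you correctly identify that the two borderline summands $(\Wedge^2 S)^2$ and $S^\vee\otimes(\Wedge^2 S)^2$ are handled by the $m=-2$ clause of that lemma.
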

\begin{proof}
Since $S = S^\vee\otimes \Wedge^2 S$, and 
$$\End(S^\vee) = \cO \;\oplus \;(\Sym^2 S^\vee \otimes \Wedge^2 S) $$
 we need to show that the following bundles have no higher cohomology on $X_+$:
\begin{list}{\roman{enumi})}{\usecounter{enumi}}
\item $\quad (\Wedge^2 S)^{-k}, \quad\quad\quad$ for $k\in [-2,2]$,
\item $\quad S^\vee\otimes (\Wedge^2 S)^{-k}, \quad$ for $k\in [-2,1]$, and
\item $\quad\Sym^2 S^\vee \otimes \Wedge^2 S $.
\end{list}
Pushing down to $\LGr(V)$, this is equivalent to asking that the same bundles have no higher cohomology on $\LGr(V)$ after we tensor them with $\Sym^n S^\vee\otimes (\Wedge^2 S)^{-n}$, for any $n\geq 0$. This follows from Lemma \ref{lem.vanishingonLGr} and the Pieri formula.
\end{proof}

\begin{lem} \label{lem.vanishingonPV} Consider the bundle $V/L$ on $\P V$. For $k\geq 1$ and $m\geq k-1$ we have:
$$H^{>0}\big(\P V,\, \Sym^k(V/L)^\vee\otimes L^{-m}\big) = 0 $$
\end{lem}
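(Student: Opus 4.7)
The plan is to apply Borel--Weil--Bott on $\P V = GL(V)/P$, where $P$ is the parabolic stabilizing a line, which is the natural tool for cohomology of homogeneous bundles on projective space. First I would identify the BWB-weight of $\Sym^k(V/L)^\vee \otimes L^{-m}$: in the standard convention where the weight is read as the highest weight of the dual of the fiber at the basepoint, the dual fiber is $\Sym^k(V/L)_{[e_1]} \otimes (\C e_1)^{\otimes m}$, of highest $L(P)$-weight $k\epsilon_2 + m\epsilon_1$, so $\lambda = (m,k,0,0)$. With $\rho=(3,2,1,0)$ this gives $\lambda+\rho = (m+3, k+2, 1, 0)$.

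The hypothesis $m \geq k-1$ translates into $m+3 \geq k+2$, i.e.\ $\lambda+\rho$ lies in the (closed) dominant chamber. When $m \geq k$ the entries $(m+3, k+2, 1, 0)$ are strictly decreasing, so $\lambda+\rho$ is $W$-regular and dominant; BWB then places all the cohomology in degree zero, giving $H^0 = V_{(m,k,0,0)}$. When $m = k-1$ the first two entries of $\lambda+\rho$ coincide, so $\lambda+\rho$ is singular (fixed by the reflection $s_1$) and BWB kills the cohomology altogether. In either case $H^{>0}=0$, as required.

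The main obstacle is simply getting the BWB conventions right for $\P V$; once the weight is correctly identified the verification is a one-line inspection of $\lambda+\rho$. Alternatively, one can avoid BWB by using the short exact sequence
$$ 0 \to \Sym^k(V/L)^\vee \otimes L^{-m} \to \Sym^k V^\vee \otimes \cO(m) \to \Sym^{k-1} V^\vee \otimes \cO(m+1) \to 0, $$
obtained by dualising and twisting the sequence $0 \to L\otimes\Sym^{k-1}V \to \Sym^k V \to \Sym^k(V/L) \to 0$ (which in turn comes from the tautological sequence and the fact that $L$ is a line bundle). Since $H^{>0}(\cO(l))=0$ for $l \geq 0$, the long exact sequence immediately gives $H^i=0$ for $i \geq 2$ and identifies $H^1$ with the cokernel of the induced map on global sections $\Sym^k V^\vee \otimes \Sym^m V^\vee \to \Sym^{k-1} V^\vee \otimes \Sym^{m+1} V^\vee$. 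Surjectivity of this map for $m \geq k-1$ is a Schur--Pieri calculation: both sides decompose as $\bigoplus_p \mathbb{S}^{(k+m-p,p)} V^\vee$, every target summand appears in the source, and non-vanishing on each common summand follows from the $\mathfrak{sl}_2$-weight structure given by $(GL(V), GL_2)$ Howe duality on $\Sym(V^\vee \otimes \C^2)$.
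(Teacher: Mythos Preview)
Your Borel--Weil--Bott argument is correct and is exactly the route the paper takes: the paper's entire proof is the single word ``Borel--Weil--Bott,'' and your identification of the weight $\lambda=(m,k,0,0)$ with $\lambda+\rho=(m+3,k+2,1,0)$ dominant (regular for $m\geq k$, singular for $m=k-1$) is the computation that justifies it. The alternative exact-sequence argument is a pleasant extra but not needed.
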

\begin{proof}Borel--Weil--Bott.\end{proof}

\begin{lem}\label{lem.vanishingonX-}  On the space $X_-$, for $m\geq -2$  we have:
$$ H^{>0}\big( X_-, \, L^{-m}) = 0 $$
\end{lem}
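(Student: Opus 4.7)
The plan is to push the computation forward from $X_-$ to $\P V$. Setting $E := (L^\perp/L)\otimes L^2$, so that $X_- = \Tot(E)$, the projection formula gives
$$H^i(X_-, L^{-m}) = \bigoplus_{k \geq 0} H^i\!\big(\P V,\, \Sym^k E^\vee \otimes L^{-m}\big).$$
Since the symplectic form on $V$ restricts to a non-degenerate form on the rank $2$ bundle $L^\perp/L$, that bundle is self-dual, hence $\Sym^k E^\vee \cong \Sym^k(L^\perp/L)\otimes L^{-2k}$. So it suffices to show that $H^{>0}(\P V,\, \Sym^k(L^\perp/L)\otimes L^{-m-2k})$ vanishes for every $k\geq 0$ and $m\geq -2$.

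I would attack this by induction on $k$, using the tautological short exact sequence $0 \to L^\perp/L \to V/L \to L^{-1} \to 0$. Dualising (and invoking self-duality of $L^\perp/L$) produces $0 \to L \to (V/L)^\vee \to L^\perp/L \to 0$, and taking $\Sym^k$ yields a filtration on $\Sym^k((V/L)^\vee)$ whose graded pieces are $L^j \otimes \Sym^{k-j}(L^\perp/L)$ for $j = 0,\ldots, k$. The piece with $j=0$ sits as the top quotient, so there is a short exact sequence
$$0 \to F \to \Sym^k((V/L)^\vee) \to \Sym^k(L^\perp/L) \to 0$$
with $F$ carrying a filtration by $L^j \otimes \Sym^{k-j}(L^\perp/L)$ for $j = 1,\ldots, k$. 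Tensor with $L^{-m-2k}$: the middle term has no higher cohomology by Lemma \ref{lem.vanishingonPV}, since the required inequality $m + 2k \geq k - 1$ holds whenever $k\geq 1$ and $m\geq -2$. Each graded piece of $F\otimes L^{-m-2k}$ is isomorphic to $\Sym^{k'}(L^\perp/L)\otimes L^{-m'-2k'}$ with $k' = k-j < k$ and $m' = m+j \geq -2$, so has no higher cohomology by the inductive hypothesis; dévissage then yields $H^{>0}(F \otimes L^{-m-2k}) = 0$, and the long exact sequence closes the induction. The base case $k = 0$ reduces to the standard vanishing $H^{>0}(\P^3, \cO(m)) = 0$ for $m\geq -2$.

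The main delicate point is the orientation of the filtration on $\Sym^k((V/L)^\vee)$: one needs $\Sym^k(L^\perp/L)$ to appear as a \emph{quotient} rather than a subbundle, so that the long exact sequence runs in the direction needed for the induction to close. All the numerical verifications are then routine.
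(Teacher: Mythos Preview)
Your argument is correct. The self-duality of $L^\perp/L$, the orientation of the filtration on $\Sym^k((V/L)^\vee)$, and the numerical check that the inductive parameters stay in range all hold as you describe.

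The paper takes a different, somewhat slicker, route. Rather than pushing $X_-$ directly down to $\P V$ and confronting $\Sym^k(L^\perp/L)$, it observes that the surjection $(V/L)\otimes L^2 \twoheadrightarrow L$ realises $X_-$ as a divisor, cut out by a section of $L$, inside the larger bundle $Y = \Tot\big((V/L)\otimes L^2\big)$. The two-term Koszul resolution $L^{-m-1}\to L^{-m}\to \cO_{X_-}\otimes L^{-m}$ on $Y$ then reduces the question to showing $H^{>0}(Y, L^{-m})=0$ for $m\ge -2$, and the pushdown of \emph{that} to $\P V$ produces $\Sym^k(V/L)^\vee \otimes L^{-2k-m}$ on the nose --- exactly the input to Lemma~\ref{lem.vanishingonPV}, with no induction required. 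In effect the paper trades your filtration-plus-induction on $\Sym^k$ for a single short exact sequence one level up; your approach stays intrinsic to $X_-$ and never introduces the auxiliary space $Y$, at the cost of a bit more bookkeeping.
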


\begin{proof}
On $\P V $ we have short exact sequence:
$$ (L^\perp/L)\otimes L^2 \To (V/L)\otimes L^2 \To L $$
Hence $X_-$ is a divisor in the space
$$Y = \Tot \big( (V/L)\otimes L^2 \To \P V\big) $$
 cut out by a section of $L$. So we compute $H^\bullet(X_-, L^{-m})$ using the Koszul complex
\beq{eq.KoszulonY} L^{-m-1} \To L^{-m} \To \cO_{X_-}\!\otimes L^{-m} \eeq
on $Y$. Hence it's sufficent to show that $H^{>0}(Y, L^{-m})$ vanishes for $m\geq -2$. Pushing down to $\P V$, we have
$$H^\bullet(Y, L^{-m}) = \bigoplus_{k\geq 0} H^\bullet\big(\P V, \, \Sym^k(V/L)^\vee\otimes L^{-2k-m}\big) $$
and  Lemma \ref{lem.vanishingonPV}, plus the fact that $H^{>0}(\P V, L^{-m})=0$ for $m\geq -3$, ensures that all higher cohomology vanishes.
\end{proof}

\begin{lem}\label{lem.H1onX-} On $X_-$ we have:
$$H^1(X_-, L^3) = \C \aand H^{>1}(X_-, L^3)= 0 $$
\end{lem}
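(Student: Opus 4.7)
The plan is to follow the same strategy as Lemma \ref{lem.vanishingonX-}, with one additional ingredient. As in that proof, $X_-\subset Y = \Tot\big((V/L)\otimes L^2 \To \P V\big)$ is cut out by a section of $L$, and the Koszul resolution twisted by $L^3$ gives a short-exact-sequence on $Y$:
$$ 0 \To L^{2} \To L^{3} \To \cO_{X_-}\!\otimes L^{3} \To 0. $$
The proof of Lemma \ref{lem.vanishingonX-} has already established $H^{>0}(Y, L^k)=0$ for $k\leq 2$, so the associated long exact sequence collapses to isomorphisms $H^i(X_-, L^3) \cong H^i(Y, L^3)$ for every $i\geq 1$. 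The problem is therefore reduced to computing $H^{\geq 1}(Y, L^3)$.

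Pushing down along $\pi\colon Y\To \P V$ via the projection formula yields
$$ H^\bullet(Y, L^3) \;=\; \bigoplus_{k\geq 0} H^\bullet\big(\P V,\, \Sym^k(V/L)^\vee \otimes L^{3-2k}\big). $$
I would handle these summands in turn. The $k=0$ piece is $H^\bullet(\P^3, \cO(-3))$, which vanishes in every degree. For $k\geq 2$ the exponent satisfies $2k-3\geq k-1$, so Lemma \ref{lem.vanishingonPV} kills all positive-degree cohomology.

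The only summand that can contribute, and the place where the $\C$ must come from, is $k=1$: $H^\bullet\big(\P V,\, (V/L)^\vee \otimes L\big)$. I would compute this from the dualized Euler sequence twisted by $L$,
$$ 0 \To (V/L)^\vee \otimes L \To V^\vee \otimes L \To \cO \To 0, $$
noting that $V^\vee\otimes L \cong V^\vee\otimes \cO_{\P^3}(-1)$ has no cohomology whatsoever. Hence the connecting homomorphism identifies $H^1\big((V/L)^\vee \otimes L\big)$ with $H^0(\cO) = \C$ and forces every other degree to vanish. Combining with the reduction of the first paragraph gives $H^1(X_-, L^3)=\C$ and $H^{>1}(X_-, L^3)=0$, as claimed. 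The argument is a direct continuation of the bookkeeping already in place for Lemma \ref{lem.vanishingonX-}; the only new input is the one Euler-sequence computation that pinpoints where the single one-dimensional class lives, so I do not expect any serious obstacle.
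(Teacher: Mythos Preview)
Your argument is correct and follows the same route as the paper: reduce to $Y$ via the Koszul sequence using $H^{>0}(Y,L^2)=0$, push down to $\P V$, and isolate the $k=1$ summand $H^{>0}\big(\P V,(V/L)^\vee\otimes L\big)$ as the sole contributor. The only difference is that you spell out the Euler-sequence computation the paper leaves as ``easily calculated''.
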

\begin{proof}
By the exact sequence \eqref{eq.KoszulonY} and the fact that $H^{>0}(Y, L^2)=0$, the higher cohomology of $L^3$ is the same on $X_-$ as it is on $Y$. Projecting from $Y$ to $\P V$ and applying Lemma \ref{lem.vanishingonPV}, we see that the only contribution to this higher cohomology is given by
$$H^{>0}\big(\P V, \, (V/L)^\vee\otimes L\big)$$
which is easily calculated.
\end{proof}

\section{A remark on the Fourier-Mukai kernel}

We are still missing a `geometric' construction of the derived equivalence, \emph{i.e.}~a reasonable description of the Fourier-Mukai kernel, but here are a few observations in that direction.

One might reasonably guess that the kernel for our equivalence is supported on the natural geometric correspondence $\hat{X}\subset X_+\times X_-$. Unfortunately this guess is wrong. Our equivalence, considered as a functor $\Phi: D^b(X_-) \to D^b(X_+)$, has the effect
$$\Phi: L^k \mapsto (\Wedge^2 S)^{-k} $$
for $k=0, 1$ or 2, and it sends $L^{-1}$ to the mapping cone
$$\cone{  (\Wedge^2 S)^{-2} \To S^\vee } \quad \in D^b(X_+) $$
on the tautological section. Using the Koszul resolution of the zero section, one sees that this cone is quasi-isomorphic to the ideal sheaf of the zero section, twisted by $\Wedge^2 S$.

Now consider the Fourier-Mukai kernel $\cO_{\hat{X}}$, \emph{i.e.}~the functor `pull-up to $\hat{X}$ and then push down'. By elementary geometric arguments one sees that this functor agrees with $\Phi$ on the objects $\cO$, $L$ and $L^{-1}$. However, it sends $L^2$ to a length 2 complex whose homology in degree zero is $(\Wedge^2 S)^{-2}$, and in degree 1 is the sky-scraper sheaf $\cO_{LGr}$ of the zero section, twisted by $\Wedge^2 S$.

Let us introduce a second kernel:
$$\cK = \cO_{\P V\times \LGr(V)}\otimes L^2\otimes \Wedge^2 S [2] \quad \in  D^b(X_-\times X_+)$$
It's easy to see that the associated functor sends $\cO$, $L$ and $L^{-1}$ to zero, but it sends $L^{-2}$ to the object $\cO_{\LGr}\otimes \Wedge^2 S [-1]$. So the kernel for our equivalence $\Phi$ must be given by the cone on some morphism between $\cO_{\hat{X}}$ and $\cK$. In particular, its support is $\hat{X}\cup \big(\P V \times \LGr(V)\big)$.

\bibliographystyle{halphanum}

\begin{thebibliography}{ADS}

\bibitem[BFK]{BFK}
Matthew Ballard, David Favero, and Ludmil Katzarkov.
\newblock {Variation of geometric invariant theory quotients and derived
  categories}.
\newblock \href{http://arxiv.org/abs/1203.6643}{\textsf{arXiv:1203.6643}}.

\bibitem[BO]{BO}
Alexei Bondal and Dmitri Orlov.
\newblock{Derived categories of coherent sheaves}.
\newblock{ \textit{Proceedings of the ICM}, Vol. II (Beijing, 2002), 47--56.}
\newblock \href{http://arxiv.org/abs/math/0206295}{\textsf{arXiv:math/0206295}}.



\bibitem[HL]{HL}
Daniel Halpern-Leistner.
\newblock {The derived category of a GIT quotient}.
\newblock{\textit{ J. Amer. Math. Soc.} \textbf{28} (2015), 871--912.}
\newblock \href{http://arxiv.org/abs/1203.0276}{\textsf{arXiv:1203.0276}}.

\bibitem[HV]{HV}
Lutz Hille and Michel Van den Bergh.
\newblock{Fourier-Mukai transforms.}
\newblock{Handbook of tilting theory, \textit{London Math. Soc. Lecture Note Ser.}, \textbf{332} (2007), 147--177}.
\newblock\href{http://arxiv.org/abs/math/0402043}{\textsf{arXiv:math/0402043}}.

\bibitem[Kaw]{Kaw}
Yujiro Kawamata.
\newblock{K-equivalence and D-equivalence}.
\newblock{\textit{ J. Differential Geom.}  \textbf{61}, No. 1 (2002), 147--171.}
\newblock\href{http://arxiv.org/abs/math/0205287}{\textsf{arXiv:math/0205287}}.

\bibitem[Kuz]{KuzECs}
Alexander Kuznetsov.
 \newblock{Exceptional collections for Grassmannians of isotropic lines}.
\newblock{\textit{Proc. London Math. Soc.}, \textbf{97}  (2008), 155--182.}
\newblock{ \href{http://arxiv.org/abs/math/0512013}{\textsf{math.AG/0512013}}}.

\bibitem[Seg]{Seg} Ed Segal.
\newblock{Equivalences between GIT quotients of Landau-Ginzburg B-models}.
\newblock{\textit{Comm. Math. Phys.} \textbf{304} (2011), 411--432.}
\newblock \href{http://arxiv.org/abs/0910.5534}{arXiv:0910.5534}.

\end{thebibliography}

\vspace{20pt}
\noindent
\emph{Ed Segal \newline Imperial College London \newline London SW72AZ, UK}

\noindent
\textsf{edward.segal04@imperial.ac.uk}

\end{document}